\newtheorem{definition}{Definition}
\newtheorem{remark}{Remark}
\newtheorem{theorem}{Theorem}
\newtheorem{corollary}{Corollary}
\newtheorem{condition}{Condition}
\theoremstyle{definition}
\newtheorem{example}{Example}
\newcommand{\RR}{{\mathbb R}}
\newcommand{\ZZ}{{\mathbb Z}}
\newcommand{\Sp}{\ensuremath{\mathcal{S}}}
\newcommand{\R}{\ensuremath{\mathcal{R}}}
\newcommand{\C}{\ensuremath{\mathcal{C}}}
\title{Non-explosivity of stochastically modeled reaction networks that are complex balanced}
\begin{document}

\author{
David F.\ Anderson\footnotemark[1] \footnotemark[3]
\and
Daniele Cappelletti\footnotemark[1] \footnotemark[4]
\and
Masanori Koyama\footnotemark[2]
\and
Thomas G.\ Kurtz\footnotemark[1]
}

\footnotetext[1]{Department of Mathematics, University of Wisconsin-Madison, USA.}
\footnotetext[2]{Department of Mathematical Science, Ritsumeikan University, Japan.}
\footnotetext[3]{Supported by NSF-DMS-1318832 and Army Research Office grant W911NF-14-1-0401.}
\footnotetext[4]{\textit{Corresponding author}. Email address \texttt{cappelletti@math.wisc.edu}.}

\maketitle

\begin{abstract}
	We consider stochastically modeled reaction networks and prove that if a constant solution to the Kolmogorov forward equation decays fast enough relatively to the transition rates, then the model is non-explosive.  In particular, complex balanced reaction networks are non-explosive.
\end{abstract}

\section{Introduction}

Stochastic reaction networks are mathematical models that describe the time evolution of the counts of interacting objects. Despite their general formulation, these models are primarily associated with biochemistry and cell biology, where they are utilized to describe the dynamics of systems of interacting proteins, metabolites, etc. \cite{cornish:enz-kinetics, ingram:nonidentifiability, karlebach:modelling,Paulsson2004}. Other applications include ecology \cite{may:stability}, epidemiology \cite{anderson:infectious}, and sociology \cite{weidlich:concepts, peschel:socio, peschel:predator}. A more recent development concerns the design of stochastic reaction networks that are able to perform computations  \cite{winfree:program, winfree:dna, doty:tile, doty:prob}.

Stochastically modeled reaction networks are typically utilized in biochemistry when at least some of the species in the model have low abundances, perhaps of order $10^2$ or fewer. In this situation, the randomness that arises from the interactions of the molecules cannot be ignored, and the counts of the molecules of the different chemical species are  modeled according to a discrete space,  continuous-time Markov chain. If   more molecules are present, perhaps between order $10^2$ and  $10^3$, a diffusion model is often utilized, which describes the system in terms of a stochastic differential equation \cite{AK2015}.  When abundances are larger,  a deterministic model is often chosen, which describes the dynamics   by means of a system of ordinary differential equations \cite{AK2011,AK2015}.  

When modeling biochemical systems, it may happen that the designed model exits every compact set in a finite time. If the model utilized is a deterministic system of differential equations, such behavior corresponds to the presence of a vertical asymptote. When this happens in a stochastic model, either in a continuous-time Markov chain or in a system of stochastic differential equations, the event takes the name of ``explosion''. Since in nature no quantity can reach infinity in a finite time, the presence of a vertical asymptote or of an explosion implies that the model fails to accurately describe the system of interest, or at least that an uncontrolled growth in the system causes it to exit the region where the model is valid. A famous example of this situation is given by the chemotaxis models discussed in \cite{herrero, chertok, childress}.

In \cite{ACK2010}, constant solutions to the forward Kolmogorov equation were constructed for an important class of continuous-time Markov chain models of chemical reaction networks, specifically those that are complex balanced. The results presented in this paper are a response to a question raised by Eduardo Sontag regarding whether or not the processes  in this class could have explosions. The answer, given in this paper, is no. Specifically, we give sufficient conditions on the constant solutions of the forward Kolmogorov equation of a stochastic reaction network that imply non-explosiveness. We then show that these conditions are satisfied by the complex balanced systems studied in \cite{ACK2010}.

Continuous time Markov chain models, say with countable state space $E\subset \ZZ^d$, are usually specified by giving transition intensities $0\leq q(x,y)<\infty$.  The intuitive interpretation of these intensities is that the corresponding process $(X_t)_{t\geq0}$ should satisfy
\begin{equation}
P( X_{t+h}=y|{\cal F}_t^X)=q(X_t,y)h+o(h),\label{tranint}
\end{equation}
where ${\cal F}_s^X$ represents all information about the process $(X_t)_{t\geq0}$ up to time $s$ (that is, $\{{\cal F}_t^X\}$ is the filtration generated by $(X_t)_{t\geq0}$).  One way of making this intuition precise is to derive the corresponding  forward/master equation for the one-dimensional distributions $P_t(x)=P(X_t=x)$, $t\geq 0$, $x\in E$, that is,
\begin{equation}\label{feq0}
\frac d{dt}P_t(x)=\sum_{y\in E\setminus \{x\}}P_t(y)q(y,x)-\sum_{y\in E\setminus \{x\}}P_t(x)q(x,y),
\end{equation}
for a specified initial distribution $P_0$, where the derivative at $t=0$ is interpreted as a right derivative.

We can give conditions for explosions in terms of what is called the minimal solution of the system \eqref{feq0}: the model is non-explosive if and only if the minimal solution of \eqref{feq0} satisfies $\sum_{x\in E} P_t(x)=1$ for all $t\geq0$, provided that $P_0(x)>0$ for all $x\in E$ \cite{AK2015}. However, those conditions are not helpful if what we are given is a constant solution of \eqref{feq0}, that is, a probability distribution $\pi$ satisfying 
\begin{equation}
0=\sum_{y\in E\setminus \{x\}}\pi (y)q(y,x)-\sum_{y\in E\setminus \{x\}}\pi (x)q(x,y).\label{cnstsol}
\end{equation}

It is worth noting at this point that there are stochastic reaction networks whose associated stochastic process undergoes explosions almost surely, but neverthless a probability distribution satisfying \eqref{cnstsol} exists. As an  example, consider the stochastic mass-action model implied by the following reaction graph (see Section \ref{sec:background} for the correspondence between the graph and the stochastic model),
\begin{equation}\label{eq:example}
A\ce{<=>[1][2]}2A\ce{<=>[7][4]}3A\ce{<=>[6][1]}4A\ce{->[1]}5A.
\end{equation}
In Section \ref{sec:explosions}, we show that the transition rates implied by the above graph admit a distribution satisfying \eqref{cnstsol}, but a stochastic process satisfying these transition rates explodes almost surely in finite time.

Given a probability distribution satisfying \eqref{cnstsol}, rather than trying to address the explosion question by analyzing the forward equation we introduce a stochastic equation which we claim the corresponding process must satisfy. Let $\{N^{xy}:x,y\in E,x\neq y\}$ be independent Poisson processes where $N^{xy}$ has intensity $q(x,y)$. Then for each function in the collection 
\[\{f:E\rightarrow {\Bbb R}:\#\{x\in E:f(x)\neq 0\}<\infty \},\]
we require
\begin{equation}
f(X_t)=f(X_0)+\sum_{x,y\in E,x\neq y}\int_0^t(f(y)-f(X_{s-})){\bf 1}_{\{X_{s-}=x\}}dN^{xy}_s.\label{steq0}
\end{equation}
Note that we use functions $f$ with finite support in defining the equation rather than simply taking $f(x)=x$ to ensure that for each $f$, the right side of the equation has finitely many jumps in a finite time interval. To be convinced that this equation captures the intuition of transition intensities, note that if $X_t=x$, the probabilty that $X_{t+h}=y$ is 
\[
	P(N_{t+h}^{xy}-N_t^{xy}=1)+o(h)=q(x,y)he^{-q(x,y)h}+o(h)=q(x,y)h+o(h).
	\]

In the Appendix, we make the above claim precise by arguing that for each solution $(X_t)_{t\ge 0}$ of the stochastic equation \eqref{steq0}, the one dimensional distributions $P_t(x)=P(X_t=x)$ give a solution of \eqref{feq0}, and for each solution $(P_t)_{t \ge 0}$ of \eqref{feq0}, there exists a solution $(X_t)_{t\ge 0}$ of the stochastic equation such that $P_t(x)=P(X_t=x)$.   In particular,  in the case of a solution of \eqref{cnstsol}, there exists a solution $(X_t)_{t\ge 0}$ of \eqref{steq0} which is a stationary process with $P(X_t=x)=\pi (x)$ for all $t\geq 0$ and $x\in E$.

If $(X_t)_{t\ge 0}$ is a solution of \eqref{steq0}, then the number of jumps by $(X_t)_{t\ge 0}$ in the time interval $[0,t]$ is simply
\[J_X(t)=\sum_{x,y\in E,x\neq y}\int_0^t{\bf 1}_{\{X_{s-}=x\}}dN^{xy}_s.\]
It turns out (Theorem \ref{thm:main}) that we can resolve the original question by simply checking that for the stationary process
\[E[J_X(t)]=E[\int_0^t\sum_{x,y\in E,x\neq y}{\bf 1}_{\{X_{s-}=x\}}dN^{xy}_
s]=t\sum_{x,y\in E,x\neq y}\pi (x)q(x,y)<\infty .
\]
That is, the expected number of jumps per unit time of the stationary processes is finite.

It is worth mentioning that deciding whether a stochastic mass-action system undergoes an explosion is not a trivial task. We will demonstrate this via examples in Section \ref{sec:explosions}. In particular, knowledge of the behavior of the associated deterministic model (which is well known for complex balanced systems) is not enough to draw conclusions about the possibility of an explosion for the stochastic model: we will show explosive stochastic models whose deterministic counterpart has bounded solutions, and positive recurrent stochastic models whose deterministic counterpart has solutions which blow-up. 

\section{Notation}
\label{sec:background}

We start by introducing some notation we will use throughout: $\ZZ_{\ge 0}$ will denote the non-negative integers, $|\cdot|$ denotes the euclidean norm. Moreover, we will say that a vector $v$ is positive if all its entries are strictly positive. We will further use $X\sim \pi$ to mean  ``the probability distribution of the random variable $X$ is $\pi$'', and we define as support of a probability measure $\pi$ the set of states $x$ such that $\pi(x)>0$. Finally, we will denote by $\lim_{t\uparrow T}f(t)$ the limit of the function $f(t)$ for $t$ approaching $T$ from the left.

For notational convenience, we will use the following shorthand notations: for any two vectors $v,w\in\RR_{\ge 0}^n$ and any vector $x\in\ZZ_{\ge 0}^n$ we denote
\[
v^w=\prod_{i=1}^n (v_i)^{w_i}\quad\text{and}\quad x!=\prod_{i=1}^n (x_i)!,
\]
with the convention $0^0=1$.

A standard knowledge of continuous-time Markov chains is assumed. See for example \cite{norris:markov} for a more detailed introduction to the topic. In particular, terminology such as ``positive recurrence,'' ``transience,''  ``embedded discrete-time Markov chain,'' etc., will be used without being defined.

\section{Stationary distributions and explosions}\label{sec:explosions}

Let $(X_t)_{t\geq0}$ be a stochastic process on $E \subset \ZZ^d$ satisfying \eqref{steq0}. We  make the following technical assumptions on the intensities.

\begin{condition}\label{intcnd}
We require the following.
\begin{itemize}
\item For $x,y\in E$, $x\neq y$, $0\leq q(x,y)<\infty$.
\item For each $y\in E$, $\sum_{x\in E\setminus\{y\}}q(y,x)<\infty$.
\item For each $x\in E$, $\sup_{y\in E\setminus\{x\}}q(y,x)<\infty$.
\end{itemize}
\end{condition}

In the Appendix we will show that the one dimensional distributions of a process $(X_t)_{t\ge0}$ satisfying \eqref{steq0} and Condition \ref{intcnd} necessarily satisfy the forward Kolmogorov equation, that is \eqref{feq0} holds.

The following definition is standard \cite{norris:markov}.
\begin{definition}\label{def:explosion}
 Consider the continuous-time Markov chain $(X_t)_{t\geq0}$, and let $\tau_i$ be the time of the $i$th jump of the process. We say that $(X_t)_{t\geq0}$ undergoes an explosion if 
 $$T_\infty=\lim_{i\to\infty} \tau_i<\infty.$$
 If $P(T_\infty<\infty)>0$ given some initial distribution of $X_0$ on $E$, then we say that the process $(X_t)_{t\geq0}$ is \emph{explosive}.
\end{definition}
Note that if $(X_t)_{t\geq0}$ undergoes an explosion, then we necessarily have 
$$\limsup_{t\uparrow T_\infty}|X_t|=\infty.$$
To verify this last statement, suppose the last equation did not hold.  Then due to Condition \ref{intcnd} we would have 
\begin{equation}\label{eq:rates_bounded}
 \theta=\limsup_{t<T_\infty}q(X_t)<\infty,
\end{equation}
where
 $$q(x)=\sum_{y\in E\setminus\{x\}} q(x,y).$$
 Equation \eqref{eq:rates_bounded} would then imply that the expectations between jump times (which are exponentially distributed with state dependent rates given by $q(\cdot)$) are uniformly bounded from below by $1/\theta$.  Hence, the possibility of an explosion would be precluded.  See also \cite[Proposition 10.21]{kallenberg}.
%It comes from basic theory of continuous-time Markov chain that if $X_t=x$ then the time until the next jump is an exponential random variable with rate $q(x)$. Hence, from \eqref{eq:rates_bounded} it would follow that the expectations of the times in between two consecutive jumps are uniformly bounded from below by $1/\theta$, making an explosion impossible \cite[Proposition 10.21]{kallenberg}.

We present a simple example that demonstrates the possibility of an explosion.

\begin{example}\label{pb}
Let $E=\{1,2,\ldots \}$, and define $q(x,x+1)=x^2$ and $q(x,y)=0$ otherwise. Starting with $x=1$ and proceeding by induction, it is simple to check the the solution of \eqref{feq0} is unique. It also follows that if $P_0(x)\geq 0$ for all $x\in E$ and $\sum_{x\in E}P_0(x)=1$, then $\sum_{x\in E}P_t(x)<1$ for all $t>0$. Consequently, if we want these intensities to determine a process $(X_t)_{t\geq0}$, then we must allow $X_t$ to be somewhere other than in $E$.   \hfill $\triangle$
\end{example}

With Example \ref{pb}\ in mind, we introduce an additional state $\partial$ and require solutions of \eqref{feq0} to satisfy both $P_t(x)\geq 0$ and $\sum_{x\in E}P_t(x)\leq 1$. We then define $P_t(\partial)=1-\sum_{x\in E}P_t(x)$. For solutions of the stochastic equation \eqref{steq0}, we extend the domain of $f$ to $E\cup \{\partial \}$ by defining $f(\partial )=0$ and define $X_t=\partial$, if $X_t\notin E$.

If $(X_t)_{t\geq0}$ undergoes an explosion then we have $X_{T_\infty}=\partial$, and the transition rates may only determine the process uniquely up to $T_\infty$. There may be different ways to extend the definition of the process $(X_t)_{t\geq0}$ for $t\geq T_\infty$, so that \eqref{steq0} holds.

The following definition is standard.
\begin{definition}\label{def:stat_process}
 Let $(X_t)_{t\geq0}$ be a stochastic process. We say that $(X_t)_{t\geq0}$ is a \emph{stationary process} with stationary distribution $\pi$ if the distribution of the translated process $X_{h+\cdot}$ does not depend on $h$, and $X_t$ is distributed according to $\pi$ for any time $t\in[0,\infty)$.
 
 Furthermore, we say that $\pi$ is a stationary distribution for the process $(X_t)_{t\geq0}$ if by assuming $X_0\sim\pi$ it follows that $(X_t)_{t\geq0}$ is a stationary process with stationary distribution $\pi$.
\end{definition}

An \emph{irreducible closed set} is a set $\Gamma$ such that for any three states $x_1, x_2\in\Gamma$ and $x_3\notin\Gamma$, we have that $X_0=x_1$ implies $P(X_t=x_2)>0$ and $P(X_t=x_3)=0$ for any $t>0$. It follows from standard Markov chain theory that the support of a stationary distribution is a union of irreducible closed sets \cite{norris:markov}. Furthermore, if the stochastic process satisfies \eqref{steq0} then \eqref{feq0} holds, and a stationary distribution $\pi$ is necessarily a constant solution to the forward equation, that is \eqref{cnstsol} holds.

It is well know that if a continuous-time Markov chain is non-explosive, at most one distribution satisfying \eqref{cnstsol} exists for any irreducible closed set \cite{norris:markov}. The same does not necessarily hold if the process is explosive. We will illustrate this in the following example. 
\begin{example}
 Consider a stochastic process on $\ZZ$ satisfying \eqref{steq0} with the following transition rates:
\begin{align*}
 q(n,n+1)=4^n\quad&\text{if }n\geq0;\\
 q(n,n-1)=\frac{4^n}{2}\quad&\text{if }n\geq1;\\
 q(n,n+1)=\frac{4^{-n}}{2}\quad&\text{if }n\leq-1;\\
 q(n,n-1)=4^{-n}\quad&\text{if }n\leq0,
\end{align*}
Then, it can be checked that
$$\pi_1(n)=\frac{1}{3}\cdot\frac{1}{2^{|n|}}\quad\text{and}\quad \pi_2(n)=\begin{dcases}
                                                                           \frac{1}{3}\cdot\frac{1}{4^{n}}&\text{if }n\geq0;\\
                                                                           \frac{1}{3}\cdot\frac{2^{-n+1}-1}{4^{-n}}&\text{if }n\leq-1
                                                                          \end{dcases}$$
are two different distributions satisfying \eqref{cnstsol}, with support on the unique irreducible closed set, that is $\ZZ$. Hence, the process is explosive.  \hfill $\triangle$
\end{example}

\section{Reaction networks}
We introduce terminology related to the models that motivated this work: reaction networks.  For more on this topic see \cite{AK2011,AK2015}.

A reaction network is a triple $(\Sp, \C, \R)$, where $\Sp$ is a finite set of \emph{species}, $\C$ is a finite set of linear combinations of species on $\ZZ_{\ge 0}$, referred to as \emph{complexes}, and finally $\R$ is a finite subset of $\C\times\C$, whose elements are called \emph{reactions}. It is usual to assume that $(y,y)\notin\R$ for any complex $y\in\C$. Usually, the $k$th reaction $(y_k,y'_k)$ is denoted by $y_k\to y'_k$. The corresponding \emph{reaction vector} is given by
\begin{equation}\label{eq:zeta}
\zeta_k=y'_k-y_k,
\end{equation}
which represents the net gain of molecules given by the occurrence of the reaction.

As an example, for the reaction network
\begin{align}\label{eq:ACR}
\begin{split}
	\emptyset \rightleftarrows A, \qquad 
	A + B &\rightleftarrows 3B
	\end{split}
\end{align}
 %\eqref{eq:example} 
 we have
\begin{gather*}
 \Sp=\{A,B\},\quad\C=\{\emptyset, A, A+B,3B\}\quad\text{and}\\
 \R=\{\emptyset \to A, A \to \emptyset, A + B \to 3B, 3B \to A+ B\}.
\end{gather*}
Arbitrarily enumerating species $A$ and $B$ as the first and second species, respectively, the reaction vectors are
\begin{align}\label{eq:ordering}
\left[ \begin{array}{r} 1\\0\end{array}\right],\quad  \left[ \begin{array}{r} -1\\0\end{array}\right], \quad  \left[ \begin{array}{r} -1\\2\end{array}\right], \quad \text{and}\quad  \left[ \begin{array}{r} 1\\-2\end{array}\right].
\end{align}

For notational convenience, we assume that the cardinality of $\Sp$ is $d$ and the cardinality of $\R$ is $m$. Any complex $y$ in $\C$ can be naturally identified with a vector in $\ZZ^d$, whose $i$th entry is the coefficient of $y$ relative to the $i$th species.  Note that this identification is being utilized in \eqref{eq:zeta}.

The main mathematical models for describing the time evolution of a system of interacting species are referred to as \emph{deterministic reaction system} and \emph{stochastic reaction system}.

\subsubsection*{Deterministic reaction system} The deterministic model is used to describe the time evolution of the concentrations of the different species. For any $t\geq0$, let $z(t)\in\RR_{\ge 0}^d$ be the vector whose entries represent the concentrations of the different species at time $t$. We associate with each reaction $y_k\to y'_k$ a rate function $\lambda_k:\RR^d_{\ge 0} \to \RR_{\ge 0}$, which describes how fast a given reaction occurs, given a certain state of the system. A choice of such rate functions constitutes a \emph{deterministic kinetics} for the model. A reaction network endowed with a deterministic kinetics is called a \emph{deterministic reaction system}. The evolution of $z(t)$ is described by the system of ordinary differential equations
\begin{align*}
\frac{d}{dt}z(t)=\sum_{k=1}^{m} \zeta_k\lambda_{k}(z(t)).
\end{align*}

A popular choice of kinetics is given by  \emph{deterministic mass-action kinetics}, which corresponds to the hypothesis that the system is well stirred. In this case, the rate functions are given by
\begin{align*}
\lambda_k(z)=\kappa_k z^{y_k}
\end{align*}
for some positive constants $\kappa_k$, called \emph{rate constants}. A reaction network endowed with deterministic mass-action kinetics is termed a \emph{deterministic mass-action system}.
Usually, a deterministic mass-action system is introduced with a graph as in \eqref{eq:example} and \eqref{eq:ACR}, where  numbers are placed next to the arrows to denote the rate constants associated with the different reactions.  
The deterministic mass-action system  associated with network \eqref{eq:ACR} is
\begin{align*}
\frac{d}{dt} z(t) = \kappa_1 \left[ \begin{array}{r} 1\\0\end{array}\right] + \kappa_2 z_1(t)  \left[ \begin{array}{r} -1\\0\end{array}\right]+ \kappa_3 z_1(t)z_2(t)  \left[ \begin{array}{r} -1\\2\end{array}\right] + \kappa_4 z_2(t)^3  \left[ \begin{array}{r} 1\\-2\end{array}\right].
\end{align*}
where the enumeration of the rate constants agrees with the ordering given in \eqref{eq:ordering}.

\subsubsection*{Stochastic reaction system} The stochastic model is used when the counts of the molecules of the different species is of interest. In this case, the time evolution of the model is described by means of a continuous-time Markov chain. For any $t\geq 0$, let $X_t\in\ZZ_{\ge 0}^d$ be the vector whose entries represent the counts of the molecules of the different species at time $t$. We associate with each reaction $y_k\to y'_k$ an intensity function $\lambda_k:\ZZ^d_{\ge 0} \to \RR_{\ge 0}$, which describes the propensity of the given reaction to occur, given a certain state of the system. Note that the notation for such a function is the same as in the deterministic setting, even though the domain of the functions is different. A choice of intensity functions constitutes a \emph{stochastic kinetics} for the model. A reaction network endowed with a stochastic kinetics is called a \emph{stochastic reaction system}. The time evolution of $X_t$ is then  described by a continuous-time Markov chain, where the transition rate from the state $x$ to the state $y$ is given by
\begin{equation}\label{eq:rates}
q(x,y) =  \sum_{k:\zeta_k=y-x}\lambda_k(x).
\end{equation}

To prevent  species counts from becoming negative we require that $\lambda_k(x)>0$ only if $x_i\geq y_{k,i}$ for each $i \in \{1,\dots,d\}$. In the event of explosion we let $X_t=\partial$ for any $t$ greater than the time at which the explosion occurs. Note that with this choice the Markov property of the process is preserved, and \eqref{steq0} holds. Moreover, Condition \eqref{intcnd} is always satisfied for transition rates given by \eqref{eq:rates}, since the number of reactions is finite.

As in the deterministic setting, a popular choice of stochastic kinetics, called \emph{stochastic mass-action kinetics}, derives from the assumption that the system is well stirred. In this case,
$$\lambda_k(x)=\kappa_k \frac{x!}{(x-y_k)!}\mathbbm{1}_{\{x\geq y_k\}}.$$
for some positive constants $\kappa_k$, which are called \emph{rate constants} as in the deterministic model. A reaction network endowed with stochastic mass-action kinetics is termed a \emph{stochastic mass-action system}.  For some choice of rate constants, the intensity functions for the stochastic mass-action system associated with reaction network \eqref{eq:ACR} are
\begin{align*}
\lambda_1(x) = \kappa_1, \quad \lambda_2(x) = \kappa_2 x_1, \quad \lambda_3(x) = \kappa_3 x_1x_2, \quad \text{ and } \quad \lambda_4(x) = \kappa_4 x_2(x_2-1)(x_2-2),
\end{align*}
where the reactions are again enumerated according to the ordering given in \eqref{eq:ordering}.

Exactly as for deterministic mass-action systems, a stochastic mass-action system is often described by means of a graph as in \eqref{eq:example} and \eqref{eq:ACR}, where  numbers are placed besides the arrows to denote the rate constants associated with the different reactions. It is therefore necessary to make explicit whether the system is modeled deterministically or stochastically, unless it is clear from the context.

\subsubsection*{Explosions in the context of reaction networks} We discuss here the stochastic mass-action system presented in the Introduction, which provides an example of a stochastic reaction system for which a distribution satisfying \eqref{cnstsol} exists, and for which an explosion occurs almost surely.

\begin{example}\label{ex:introduction}
 Consider the mass-action system \eqref{eq:example}, stochastically modeled. For convenience, we repeat it here:
$$A\ce{<=>[1][2]}2A\ce{<=>[7][4]}3A\ce{<=>[6][1]}4A\ce{->[1]}5A.$$
The stochastic model corresponds to a birth and death process whose state space is the positive integers and whose birth and death rates are, respectively,
$$q(x,x+1)=x+7x(x-1)+6x(x-1)(x-2)+x(x-1)(x-2)(x-3)=x^4\quad\text{for any }x\geq1$$
and
$$q(x,x-1)=2x(x-1)+4x(x-1)(x-2)+x(x-1)(x-2)(x-3)=x^2(x-1)^2\quad\text{for any }x\geq2.$$
It can be easily seen that
$$\pi(x)=\frac{6}{\pi^2x^2}$$
is a distribution satisfying \eqref{cnstsol}, and in particular is detailed balanced. Nevertheless, the model is explosive: we can show this by proving that the embedded discrete time Markov chain is transient, which follows from the following calculation
\begin{align*}
\sum_{n=2}^{\infty}\prod_{x=2}^{n} \frac{q(x,x-1)}{q(x,x+1)} = \sum_{n=2}^{\infty}\prod_{x=2}^{n}\frac{x^2(x-1)^2}{x^4}=\sum_{n=2}^{\infty}\prod_{x=2}^{n}\frac{(x-1)^2}{x^2}=\sum_{n=2}^{\infty}\frac{1}{n^2} <\infty,
\end{align*}
and the results of Section 3.3 in \cite{Lawler95}.
Since the embedded discrete time Markov chain is transient and the model has a distribution satisfying \eqref{cnstsol}, it must be explosive \cite{norris:markov}.  \hfill $\triangle$
\end{example}

In Example \ref{ex:introduction}, the solution of the corresponding deterministic mass-action system also has a blow-up.  Indeed,  the differential equation governing the deterministic dynamics is 
$$\frac{d}{dt}z(t)= 2z(t)^3+5z(t)^2+z(t),$$
whose solutions have a vertical asymptote for any $z(0)>0$. Interestingly, knowing that the associated ODE model is explosive is not enough to conclude that the stochastic model is explosive or even transient. To support the last assertion, we give two additional examples. In Example \ref{ex:blowuprecurrent} all solutions of the deterministic model with positive initial condition have a blow-up, while the corresponding stochastic process is non-explosive. Conversely, in Example \ref{ex:regularexplosive} the ODE system has bounded solutions, while the associated stochastic model is explosive.

\begin{example}\label{ex:blowuprecurrent}
 Consider
$$ A\ce{<=>[1][2]}2A\ce{<=>[3][1]}3A\ce{->[1]}4A$$
As in Example \ref{ex:introduction}, the stochastic model corresponds to a birth and death process whose state space is the positive integers. The birth and death rates are, respectively,
$$q(x,x+1)=x+3x(x-1)+x(x-1)(x-2)=x^3\quad\text{for any }x\geq1$$
and
$$q(x,x-1)=2x(x-1)+x(x-1)(x-2)=x^2(x-1)\quad\text{for any }x\geq2.$$
The differential equation of the corresponding deterministic mass-action system is 
$$\frac{d}{dt}z(t)= z(t)^2+z(t),$$
so the deterministic model has a blow-up for any $z(0)>0$. Nevertheless, in this case the stochastic model is non-explosive and is in fact positive recurrent. We can check the recurrence of the continuous-time Markov chain by
\begin{align*}
\sum_{n=2}^{\infty}\prod_{x=2}^{n} \frac{q(x,x-1)}{q(x,x+1)} = \sum_{n=2}^{\infty}\prod_{x=2}^{n}\frac{x^2(x-1)}{x^3}=\sum_{n=2}^{\infty}\prod_{x=2}^{n}\frac{x-1}{x}=\sum_{n=2}^{\infty}\frac{1}{n}=\infty.
\end{align*}
and the results of Section 3.3 in \cite{Lawler95}. Then, positive recurrence follows from the fact that
$$\pi(x)=\frac{6}{\pi^2x^2}$$
satisfies \eqref{cnstsol},  as  can be easily checked.  Note that $\pi(\cdot)$ on the left of the above denotes the  distribution, whereas the $\pi$ on the right denotes the usual well-known constant. \hfill $\triangle$
\end{example}
\begin{example}\label{ex:regularexplosive}
 Consider the mass-action system
\begin{gather*}
 0\ce{<=>[1][1]}A\ce{<=>[1][1]}B\\
 2C\ce{->[1]}3C\\
 3C+A\ce{->[1]}2C+A.
\end{gather*}
Let $z(t)$ be the solution of the deterministic model, with $z_1(t)$, $z_2(t)$ and $z_3(t)$ denoting the concentration of the species $A$, $B$ and $C$, respectively. Then, $z(\cdot)$ converges to the unique positive equilibrium
$$(1,1,1)$$
for any initial condition $z(0)\in\RR_{\ge 0}^3$, with $z_3(0)>0$ (otherwise the solution would tend to $(1,1,0)$). Conversely, for any initial condition $X_0\in\ZZ_{\ge 0}^3$ with the counts of species $C$ greater than or equal to 2, the stochastic model undergoes an explosion almost surely: it occurs infinitely often that the counts of $A$ hit 0, and remain null for a positive time before either $0\to A$ or $B\to A$ takes place. Whenever the counts of $A$ are 0, the counts of $C$ can only change through the reaction $2C\to 3C$. That is, the counts of $C$ change accordingly to a pure birth process with birth rate
$$q(x,x+1)=x(x-1),$$
which is known to explode almost surely (see \cite{norris:markov}). Hence, whenever the counts of $A$ are 0, the counts of $C$ have a positive probability of exploding. Furthermore, it can be shown that the network $0\ce{<=>}A\ce{<=>}B$ defines a positive recurrent Markov chain: indeed, the associated model corresponds to two $M/M/\infty$ queues linked to each other. Hence, the counts of the molecules $A$ hit 0 infinitely often, and an explosion of the counts of $C$ will occur almost surely.\hfill $\triangle$
\end{example}

In conclusion, determining when a stochastic mass-action system undergoes an explosion remains in general a hard question. In particular, studying the behavior of the associated deterministic model is in general not enough.

\subsubsection*{Complex balancing}

In the context of deterministic reaction systems, there exists a particular kind of equilibria, referred to as \emph{complex balanced}, whose properties have been intensively studied. The results in the context of chemical reaction network theory date back to at least \cite{horn:general, feinberg}. Indeed, complex balanced equilibria are a generalization of detailed balanced equilibria, which are fundamental in the study of thermodynamics. We give the formal definition here.
\begin{definition}\label{def:cb}
 We say that $c\in \RR^d_{\ge 0}$ is a \emph{complex balanced} equilibrium if for any $y\in\C$ we have
 \begin{equation}\label{eq:cb_general}
  \sum_{k:y_k=y}\lambda_k(c)=\sum_{k:y'_k=y}\lambda_k(c),
 \end{equation}
 where the sum on the left, respectively right, is over those reactions with $y$ as source, respectively product, complex.
\end{definition}
Namely, at complex balanced equilbria the flux ``entering'' each complex equals the flux ``exiting'' from it. For mass-action kinetics, \eqref{eq:cb_general} becomes
$$\sum_{k:y_k=y}\kappa_kc^{y_k}=\sum_{k:y'_k=y}\kappa_kc^{y_k}.$$
Complex balanced equilibria have particularly nice properties in the context of mass-action systems. For example, positive complex balanced equilibria are locally asymptotically stable, and if a positive complex balanced equilibrium exists, then all equilibria are necessarily complex balanced \cite{horn:general}. Due to the last property, we can give the following definition:
\begin{definition}
 We say that a deterministic mass-action system is \emph{complex balanced} if one (and therefore all) positive equilibrium of the system is complex balanced, and at least one positive equilibrium exists.
\end{definition}
For example, note that the deterministic mass-action system associated with the network \eqref{eq:ACR} with rate constants $\kappa_1=\kappa_2=\kappa_3 = \kappa_4=1$ is complex balanced with  equilibrium  $c = (1,1)$.  Note also that there can not be a choice of rate constants for the model \eqref{eq:example} which admit a complex balanced equilibrium as it is impossible to ever balance the complex $5A$.

The relevance of complex balancing in the setting of stochastic reaction systems was first explored in \cite{ACK2010}, where the following result is shown.  Followup results include \cite{AC2016,cappelletti:complex_balanced}.
\begin{theorem}\label{thm:poisson}
 If a deterministic mass-action system with rate constants $\kappa_k$ is complex balanced and has a positive complex balanced equilibrium $c$, then the corresponding stochastically modeled system, with the same choice of rate constants, admits a solution to \eqref{cnstsol} of the form
 \begin{align}\label{eq:distribution}
 	\pi(x) =\prod_{i=1}^d \frac{c_i^x}{x_i!}e^{-c_i}, \quad x \in \ZZ^d_{\ge 0}.
 \end{align}
\end{theorem}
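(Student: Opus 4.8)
The plan is to verify directly that the product-form measure $\pi$ in \eqref{eq:distribution} satisfies the stationary forward equation \eqref{cnstsol} on the state space $\ZZ^d_{\ge 0}$. Since $\pi$ is a product of Poisson probability mass functions it is automatically a genuine probability distribution, and since the network has finitely many reactions, for each fixed $x$ at most $m$ states $y$ have $q(x,y)>0$ and at most $m$ have $q(y,x)>0$, so every sum below is finite (and Condition \ref{intcnd} holds, as already noted). Fix $x\in\ZZ^d_{\ge 0}$. Inserting the mass-action intensities into \eqref{eq:rates}, the outflow term becomes
\[
\sum_{y\neq x}\pi(x)q(x,y)=\pi(x)\sum_{k=1}^m\kappa_k\frac{x!}{(x-y_k)!}\,\mathbbm{1}_{\{x\ge y_k\}},
\]
and, writing the unique predecessor state feeding $x$ through reaction $k$ as $x-\zeta_k$ (which makes sense since $\zeta_k\neq 0$) and using the identity $x-\zeta_k-y_k=x-y_k'$, the inflow term becomes
\[
\sum_{y\neq x}\pi(y)q(y,x)=\sum_{k=1}^m\kappa_k\,\pi(x-\zeta_k)\,\frac{(x-\zeta_k)!}{(x-y_k')!}\,\mathbbm{1}_{\{x\ge y_k'\}}.
\]

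The main step is the Poisson intertwining identity: for any complex $\eta\in\C$, viewed as a vector in $\ZZ^d_{\ge 0}$, and any $x\ge\eta$,
\[
\pi(x)\,\frac{x!}{(x-\eta)!}=c^{\eta}\,\pi(x-\eta),
\]
which is immediate from $\pi(x)=\prod_{i=1}^d e^{-c_i}c_i^{x_i}/x_i!$ by cancelling factorials and extracting $c^{\eta}$. Applying it to the outflow term with $\eta=y_k$, and to the inflow term evaluated at the state $x-\zeta_k$ with $\eta=y_k$ (so that $(x-\zeta_k)-y_k=x-y_k'$ and $\mathbbm{1}_{\{x-\zeta_k\ge y_k\}}=\mathbbm{1}_{\{x\ge y_k'\}}$), equation \eqref{cnstsol} reduces to
\[
\sum_{k=1}^m\kappa_k c^{y_k}\,\pi(x-y_k')\,\mathbbm{1}_{\{x\ge y_k'\}}=\sum_{k=1}^m\kappa_k c^{y_k}\,\pi(x-y_k)\,\mathbbm{1}_{\{x\ge y_k\}}.
\]

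Finally I would regroup both sides according to complexes: a reaction $k$ contributes to the left-hand side through the complex $\eta=y_k'$ (its product complex) and to the right-hand side through $\eta=y_k$ (its source complex), so the last display is equivalent to
\[
\sum_{\eta\in\C}\pi(x-\eta)\,\mathbbm{1}_{\{x\ge\eta\}}\sum_{k:\,y_k'=\eta}\kappa_k c^{y_k}=\sum_{\eta\in\C}\pi(x-\eta)\,\mathbbm{1}_{\{x\ge\eta\}}\sum_{k:\,y_k=\eta}\kappa_k c^{y_k},
\]
which holds term by term in $\eta$ precisely because $c$ is a complex balanced equilibrium, i.e.\ \eqref{eq:cb_general} with $\lambda_k(c)=\kappa_k c^{y_k}$. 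I do not foresee a genuine obstacle here: the entire argument is the Poisson intertwining identity together with careful bookkeeping, the only delicate points being the index shift $x\mapsto x-\zeta_k$ with its accompanying support constraint, and the observation — automatic once everything is indexed by reactions $k$ rather than by target states $y$ — that several reactions may share the same reaction vector $\zeta_k$.
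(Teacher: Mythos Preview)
Your argument is correct and is essentially the original proof from \cite{ACK2010}; note that the present paper does not give its own proof of Theorem~\ref{thm:poisson} but simply quotes the result from that reference. The Poisson intertwining identity $\pi(x)\,x!/(x-\eta)!=c^{\eta}\pi(x-\eta)$ together with the regrouping by complexes is exactly the mechanism used there, so there is nothing to compare.
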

The main utility of the above theorem resides in the fact that a  reaction network will admit a complex balanced equilibrium if the network satisfies two easily checked properties: weak reversibility and a deficiency of zero \cite{feinberg}.  It is straightforward to check that the network \eqref{eq:ACR} is weakly reversible and has a deficiency of zero.  Therefore, we know that it admits a distribution satisfying \eqref{cnstsol} of the form \eqref{eq:distribution}.  (Though to find the distribution on a particular irreducible closed set of the state space, \eqref{eq:distribution} needs to be projected onto it. See \cite{SZ2010} for more on this.)

In \cite{cappelletti:complex_balanced} the converse of Theorem \ref{thm:poisson} is proven. Also in \cite{cappelletti:complex_balanced} a definition of complex balancing in the setting of stochastic reaction systems is provided and it is proven that a stochastic mass-action system is complex balanced if and only if it is complex balanced when deterministically modeled. 

\section{Main results}

In this section, we consider a stochastic process $(X_t)_{t\geq0}$ satisfying \eqref{steq0}, with transition rates satisfying Condition \ref{intcnd}. We then consider a distribution $\pi$ which is a constant solution to the forward equation, that is $\pi$ satisfies \eqref{cnstsol}. 
Here we prove that if 
\begin{equation}\label{eq:expectation_finite}
  \sum_{x\in\ZZ^d_{\ge 0}}\pi(x)q(x)<\infty
\end{equation}
 (recall $q(x)=\sum_{y\in E\setminus\{x\}} q(x,y)$), then $(X_t)_{t\geq0}$ is non-explosive. In turn, positive recurrence of $(X_t)_{t\geq0}$ within the irreducible closed sets where $\pi$ is positive is assured. Finally, by standard Markov chain theory, it follows that $\pi$ describes  the limit distribution of the model restricted to irreducible closed sets where $\pi$ is positive.

We state here our main result.
  
\begin{theorem}\label{thm:main}
Consider a stochastic process $(X_t)_{t\geq0}$ on a countable state space $E\subset \ZZ^d$, satisfying \eqref{steq0} and whose rates satisfy Condition \ref{intcnd}. Assume there exists a distribution $\pi$ satisfying \eqref{cnstsol}, such that \eqref{eq:expectation_finite} holds. Then, the process $(X_t)_{t\geq0}$ is non-explosive if the support of $P_0$ is contained in the support of $\pi$.
\end{theorem}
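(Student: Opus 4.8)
The plan is to build a stationary version of the process started from $\pi$ and to control its jump count. By the correspondence developed in the Appendix (and recalled in the Introduction), since $\pi$ satisfies \eqref{cnstsol} there is a stationary process $(X_t)_{t\ge 0}$ solving \eqref{steq0} with $X_t\sim\pi$ for all $t$. For such a process, the number of jumps in $[0,t]$ is
\[
J_X(t)=\sum_{x,y\in E,\,x\neq y}\int_0^t \mathbf 1_{\{X_{s-}=x\}}\,dN^{xy}_s,
\]
and taking expectations, using that $N^{xy}$ has intensity $q(x,y)$ and is independent of the past (so the compensator of $dN^{xy}_s$ on $\{X_{s-}=x\}$ is $q(x,y)\,ds$), together with $X_{s-}\sim\pi$, gives
\[
\E[J_X(t)]=\int_0^t\sum_{x,y\in E,\,x\neq y}\pi(x)q(x,y)\,ds=t\sum_{x\in E}\pi(x)q(x)<\infty
\]
by hypothesis \eqref{eq:expectation_finite}. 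Hence $J_X(t)<\infty$ almost surely for every $t$, so the stationary process makes only finitely many jumps in any finite time window; in particular $T_\infty=\lim_i\tau_i=\infty$ almost surely for this stationary process, i.e.\ the process started from $\pi$ is non-explosive.

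Next I would upgrade this from "non-explosive when started from $\pi$" to "non-explosive whenever $P_0$ is supported in the support of $\pi$." The support of $\pi$ is a union of irreducible closed sets (as noted in the excerpt). If $X_0$ is a single state $x_0$ in the support of $\pi$, then $\pi(x_0)>0$, and since the event of explosion, $\{T_\infty<\infty\}$, depends only on the jump chain and jump times, one can compare the process started from $x_0$ with the stationary process conditioned to be at $x_0$ at time $0$: by the Markov property the law of $(X_{t})_{t\ge0}$ given $X_0=x_0$ is the same under either initialization up to the explosion time. Since the stationary process visits $x_0$ with positive probability and is non-explosive there, the process started deterministically from $x_0$ must also be non-explosive. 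Averaging over $x_0$ with respect to $P_0$ (which is supported in $\mathrm{supp}\,\pi$) then yields $P(T_\infty<\infty)=0$ for the given initial distribution, which is exactly the claim.

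The main obstacle is the first step: justifying that the stationary process solving \eqref{steq0} genuinely exists and that expectation and the Poisson integrals may be interchanged so cleanly — in particular handling the possibility that $q(x)$ is finite for each $x$ but the process could still, a priori, explode, which is precisely what makes the bound on $\E[J_X(t)]$ the crux rather than a formality. One must be careful that $J_X(t)$ as written counts exactly the jumps of $X$ and that the optional-projection/compensation argument is valid despite $E$ being infinite; Condition \ref{intcnd} (finiteness of $q(y)$ for each $y$ and of $\sup_x q(y,x)$) is what licenses the term-by-term computation and the use of the monotone convergence theorem to sum over the countably many pairs $(x,y)$. Once $\E[J_X(t)]<\infty$ is secured, the remainder is standard Markov-chain bookkeeping. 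The positive recurrence and convergence-to-$\pi$ assertions then follow from standard theory once non-explosivity and \eqref{cnstsol} are both in hand, but these are stated as consequences rather than part of the theorem to be proved.
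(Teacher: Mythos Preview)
Your proposal is correct and follows essentially the same route as the paper: construct a stationary solution of \eqref{steq0} with marginal $\pi$ (via the Appendix), use \eqref{eq:expectation_finite} to bound $\E[J_X(t)]$ so that the stationary process has finitely many jumps on compacts and hence does not explode, and then transfer non-explosivity to any initial law supported in $\mathrm{supp}\,\pi$. If anything, you have written out more of the details (the compensator computation for $\E[J_X(t)]$ and the conditioning argument for the last step) than the paper does.
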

\begin{proof}
 In the Appendix, we will show that under the conditions of Theorem \ref{thm:main} a stationary process $(\tilde{X}_t)_{t\geq0}$ satisfying \eqref{steq0} and with stationary distribution $\pi$ exists. Note that, as already discussed in the Introduction, \eqref{eq:expectation_finite} states that the expected number of jumps of $(\tilde{X}_t)_{t\geq0}$ per unit time is finite. Hence, $(\tilde{X}_t)_{t\geq0}$ necessarily has finitely many jumps in compact time intervals, and is non-explosive. It follows that any process with the support of the initial distribution contained in the support of $\pi$ is non-explosive, and this concludes the proof.
\end{proof}

\begin{remark}\label{rem:embedded}
 It is worth noting that \eqref{eq:expectation_finite} holds if and only if the discrete time embedded process of $(X_t)_{t\geq0}$ is positive recurrent. We will show it here.
 
 Up to the potential explosion at time $T_\infty$, $(X_t)_{t\geq0}$ is a continuous-time Markov chain. Let $\{Z_n\}_{n\in\ZZ_{\ge 0}}$ be the embedded discrete time Markov chain of $(X_t)_{0\leq t<T_{\infty}}$. The transition probabilities for $Z_n$ are given by
\[
	p(x,y) = \frac{q(x,y)}{q(x)}.
\]
Set 
\begin{align}
\label{eq:66}
\pi_Z(x) = \pi(x)q(x).
\end{align}
Then, $\pi_Z$ can be normalized to be a stationary distribution for $Z_n$. To show this, we need to show that for any $x \in E$,
\begin{align}
\label{eq:5678988}
 \pi_Z(x) = \sum_{y\in E} \pi_Z(y) p(y,x),
\end{align}
and that
\begin{align}
\label{eq:545}
\sum_{x\in\ZZ^d_{\ge 0}} \pi_Z(x) < \infty.
\end{align}
 
 Plugging $\pi_Z$ from \eqref{eq:66} into the right-hand side of \eqref{eq:5678988} yields
$$
	 \sum_{y\in E} \pi_Z(y) p(y,x) = \sum_{y\in E} \pi(y) q(y,x) = \pi(x)q(x),
$$
where the final equality  holds since $\pi$ satisfies \eqref{cnstsol}.  Due to \eqref{eq:66} we have what we want.

Next, 
$$
	\sum_{x\in E} \pi_Z(x) = \sum_{x\in E} \pi(x) q(x) <\infty
$$
by assumption. Hence, all the points in the support of $\pi$ must be recurrent for the process $(Z_n)_{n\in\ZZ_{\ge 0}}$.
 \end{remark}
 
We can specialize to the case of reaction network theory, in which case Theorem \ref{thm:main} becomes the following.

\begin{corollary}\label{cor:main}
 Let $(X_t)_{t\geq0}$ be the continuous-time Markov chain associated with a stochastic reaction system. Assume that a distribution $\pi$ satisfying \eqref{cnstsol} exists, such that %for any $1\leq k\leq m$
 \begin{equation}\label{eq:condition}
  \sum_{x\in\ZZ^d_{\ge 0}} \pi(x)\sum_{k=1}^m\lambda_k(x)<\infty.
 \end{equation}
Then, the process $(X_t)_{t\geq0}$ is non-explosive if the support of $P_0$ is in the support of $\pi$.
\end{corollary}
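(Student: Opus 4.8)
The plan is to deduce Corollary \ref{cor:main} directly from Theorem \ref{thm:main}, the only work being to translate the abstract hypotheses into the language of reaction systems. So let $(X_t)_{t\geq0}$ be the continuous-time Markov chain associated with a stochastic reaction system $(\Sp,\C,\R)$ with intensity functions $\lambda_1,\dots,\lambda_m$; its state space is a countable subset $E$ of $\ZZ^d_{\ge 0}$, its transition rates are given by \eqref{eq:rates}, and, as already observed in the paragraph following \eqref{eq:rates}, such a process satisfies \eqref{steq0} and its rates satisfy Condition \ref{intcnd} (the first bullet because $\lambda_k\ge 0$, and the last two because $m<\infty$). Thus the three structural hypotheses of Theorem \ref{thm:main} are automatic here, and by assumption a distribution $\pi$ satisfying \eqref{cnstsol} exists.

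It therefore remains only to check that \eqref{eq:condition} is exactly the hypothesis \eqref{eq:expectation_finite} for these rates, i.e.\ that
\[
q(x)=\sum_{y\in E\setminus\{x\}}q(x,y)=\sum_{k=1}^{m}\lambda_k(x)\qquad\text{for every }x\in E.
\]
This is the one point that needs a word of care: from \eqref{eq:rates}, $\sum_{y\in E\setminus\{x\}}q(x,y)=\sum_{y\in E\setminus\{x\}}\sum_{k:\zeta_k=y-x}\lambda_k(x)$. Since $(y,y)\notin\R$ for every complex $y$, we have $\zeta_k=y'_k-y_k\neq 0$ for every $k$, so each index $k$ contributes to the term with $y=x+\zeta_k\neq x$ and to no other; hence the double sum collapses to $\sum_{k=1}^{m}\lambda_k(x)$, regardless of whether distinct reactions happen to share a reaction vector. (If $x+\zeta_k\notin E$ then $\lambda_k(x)=0$ by the non-negativity constraint on intensities, so such terms are harmless.) Substituting this identity into \eqref{eq:expectation_finite} turns it verbatim into \eqref{eq:condition}.

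With all hypotheses of Theorem \ref{thm:main} verified, that theorem yields that $(X_t)_{t\geq0}$ is non-explosive whenever the support of $P_0$ is contained in the support of $\pi$, which is the assertion of the corollary. I do not expect any genuine obstacle; the only thing to be careful about is the bookkeeping in the display above (that every reaction vector is nonzero and that coincidences among reaction vectors do not cause double counting), after which the result is immediate.
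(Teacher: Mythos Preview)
Your proof is correct and follows exactly the paper's approach: apply Theorem \ref{thm:main} after noting that for a stochastic reaction system $q(x)=\sum_{k=1}^m\lambda_k(x)$, so that \eqref{eq:condition} is precisely \eqref{eq:expectation_finite}. Your write-up simply spells out the bookkeeping (nonzero reaction vectors, no double counting) that the paper leaves implicit.
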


The proof relies on Theorem \ref{thm:main} and on the fact that in the case of stochastic reaction systems we have
$$q(x)=\sum_{k=1}^m \lambda_k(x).$$
Note that Corollary \ref{cor:main} provides sufficient conditions for non-explosivity. However these conditions are not necessary. For example, consider the model associated to the mass-action system in Example \ref{ex:blowuprecurrent}, which we have already proved  to be non-explosive. In this case
$$\sum_{k=1}^m\sum_{x\in\ZZ^d_{\ge 0}}\pi(x)\lambda_k(x)=\sum_{x=0}^\infty \frac{6(2x^3 - x^2)}{\pi^2 x^2} =\infty.$$
Since the model is positive recurrent and the state space is irreducible, it can be further shown that
$$\pi(x)=\frac{6}{\pi^2x^2}$$
is the unique distribution satisfying \eqref{cnstsol}. Hence, in this case, \eqref{eq:condition} does not hold for any distribution satisfying \eqref{cnstsol}. In the spirit of Remark \ref{rem:embedded}, it is worth noting that in this case, the embedded discrete time Markov chain is null recurrent.

The following is a consequence of Corollary \ref{cor:main}, which for practical purposes is the main result of the paper.

\begin{theorem}
For any initial distribution, complex balanced stochastic mass-action systems are non-explosive.
\end{theorem}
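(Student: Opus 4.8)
The plan is to apply Corollary~\ref{cor:main} to the product-form Poisson distribution supplied by Theorem~\ref{thm:poisson}. Since a complex balanced stochastic mass-action system is, by definition, complex balanced when deterministically modeled, it admits a positive complex balanced equilibrium $c$, and Theorem~\ref{thm:poisson} then guarantees that
\[
\pi(x) = \prod_{i=1}^d \frac{c_i^{x_i}}{x_i!}\,e^{-c_i}, \qquad x \in \ZZ_{\ge 0}^d,
\]
is a solution of \eqref{cnstsol}. Because $c$ is positive, $\pi$ is strictly positive on all of $\ZZ^d_{\ge 0}$; hence for \emph{any} initial distribution $P_0$ (necessarily supported in $\ZZ^d_{\ge 0}$) the support condition in Corollary~\ref{cor:main} holds trivially. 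It therefore remains only to verify the summability condition \eqref{eq:condition}.

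First I would note that under $\pi$ the coordinates $x_1,\dots,x_d$ are independent with $x_i$ Poisson of mean $c_i$. For each reaction $y_k \to y'_k$ with mass-action intensity $\lambda_k(x) = \kappa_k \frac{x!}{(x-y_k)!}\mathbbm{1}_{\{x \ge y_k\}}$, I would then compute
\[
\sum_{x \in \ZZ^d_{\ge 0}} \pi(x)\lambda_k(x) = \kappa_k\, \E\!\left[\prod_{i=1}^d x_i(x_i-1)\cdots(x_i - y_{k,i}+1)\right] = \kappa_k \prod_{i=1}^d c_i^{\,y_{k,i}} = \kappa_k c^{y_k},
\]
where the middle equality uses independence of the coordinates together with the elementary fact that the $j$-th descending factorial moment of a Poisson$(c_i)$ variable equals $c_i^{\,j}$ (immediate from $\E[z^{x_i}] = e^{c_i(z-1)}$ upon differentiating $j$ times at $z = 1$, or directly from the Poisson series).

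Summing this identity over the finitely many reactions in $\R$ gives
\[
\sum_{x \in \ZZ^d_{\ge 0}} \pi(x) \sum_{k=1}^m \lambda_k(x) = \sum_{k=1}^m \kappa_k c^{y_k} < \infty,
\]
which is precisely \eqref{eq:condition}. Corollary~\ref{cor:main} now yields non-explosivity for every initial distribution, completing the argument. I do not expect any genuine obstacle here: all of the conceptual work is already contained in Theorem~\ref{thm:poisson} and Corollary~\ref{cor:main}, and the only computation needed is the descending-factorial-moment formula for the Poisson law, so this theorem is essentially the payoff of the preceding development.
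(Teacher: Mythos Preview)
Your proof is correct and follows the same route as the paper: invoke the product-form Poisson distribution from Theorem~\ref{thm:poisson}, verify the summability condition \eqref{eq:condition}, and conclude via Corollary~\ref{cor:main}. The only difference is cosmetic: the paper dispatches \eqref{eq:condition} in one line by observing that mass-action intensities are polynomials of bounded degree (so their expectation against a Poisson law is finite), whereas you compute the exact value $\kappa_k c^{y_k}$ via the descending-factorial-moment identity; your handling of the support condition (via strict positivity of $\pi$ on $\ZZ^d_{\ge 0}$) is also slightly more explicit than the paper's.
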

\begin{proof}
 By Theorem \ref{thm:poisson}, the state space of a complex balanced stochastic mass-action system is a union of irreducible closed sets, and a distribution satisfying \eqref{cnstsol} of the form
 $$\pi(x)=M\frac{c^x}{x!}\quad\text{for any }x\in\ZZ_{\ge 0}^d$$
 exists, for a suitable positive vector $c$ and a finite normalizing constant $M$. Hence, the proof follows from  Corollary \ref{cor:main}, as 
 $$\sum_{x\in\ZZ^d_{\ge 0}}\pi(x)\lambda_k(x)=M\sum_{x\in\ZZ^d_{\ge 0}}\frac{c^x}{x!}\lambda_k(x)<\infty,$$
 since under mass-action kinetics the rates $\lambda_k$ are polynomials with fixed degree.
\end{proof}

\section*{Appendix}

Let $E\subset \ZZ^d$ and for $x,y\in E$, $x\neq y$, let $q(x,y)\geq 0$ and assume Condition \ref{intcnd} is satisfied. As already discussed in the Introduction, a continuous-time Markov chain on $E$ with transition intensities $\{q(x,y)\}$ is intuitively a stochastic process $(X_t)_{t\geq0}$ in $E$ such that 
\[P(X_{t+h}=y|{\cal F}_t^X)=q(X_t,y)h+o(h),\quad t,h\geq 0,\]
where $\{{\cal F}_t^X\}$ is the filtration generated by $(X_t)_{t\geq0}$.  There are several ways of making this intuition precise. 

\begin{definition}\label{def:appendix}
\begin{enumerate}[a)]
\item The one-dimensional distributions of $(X_t)_{t\geq0}$ satisfy the {\em forward\/} or {\em master equation\/} if for every $x\in E$
\begin{equation}\label{feq}
 \frac{d}{dt}P_t(x)=\sum_{y\in E\setminus\{x\}} P_t(y)q(y,x)-\sum_{y\in E\setminus\{x\}} P_t(x)q(x,y),
\end{equation}
where the derivative at $t=0$ is interpreted as a right derivative.
\item Consider 
$${\cal D}(E)=\{f\colon E\to\RR: \#\{x\in E:f(x)\neq 0\}<\infty \}$$
and define the linear operator $A$ on ${\cal D}(E)$ by
\[Af(x)=\sum_{y\in E\setminus\{x\}}q(x,y)(f(y)-f(x)).\]
Then $(X_t)_{t\geq0}$ is a solution of the {\em martingale problem\/} for $A$ if
\begin{equation}\label{mgf}
 f(X_t)-f(X_0)-\int_0^tAf(X_s)ds
\end{equation}
is a $\{{\cal F}_t^X\}$-martingale for each $f\in {\cal D}(E)$.
\item For $x,y\in E$, $x\neq y$, let $(N^{xy}_t)_{t\geq0}$ be a Poisson process with intensity $q(x,y)$. The {\em stochastic equation\/} for $(X_t)_{t\geq0}$ is given by
\begin{equation}\label{eq:stochequation88}
f(X_t)=f(X_0)+\sum_{x,y\in E :x\neq y}\int_0^t(f(y)-f(X_{s-})){\bf 1}_{\{X_{s-}=x\}}dN^{xy}_s.
\end{equation}
\end{enumerate}
\end{definition} 

\begin{remark}
Note that by Condition \ref{intcnd}, for $f\in {\cal D}(E)$, $\sup_{x\in E}|Af(x)|<\infty$.
\end{remark}

Furthermore, we introduce an additional state $\partial$ as done in the main text. Solutions of \eqref{feq} need to satisfy $P_t(x)\geq 0$, $\sum_{x\in E}P_t(x)\leq 1$ and $P_t(\partial)=1-\sum_{x\in E}P_t(x)$. Moreover, for solutions of the martingale problem and the stochastic equation, we extend $f\in {\cal D}(E)$ to $E\cup \{\partial \}$ by defining $f(\partial )=0$ and define $X_t=\partial$, if $X_t\notin E$.

With this understanding of solutions, it follows that any solution of the stochastic equation \eqref{eq:stochequation88} is a solution of the martingale problem. Indeed, we can write
$$f(X_t)-f(X_0)-\int_0^tAf(X_s)ds=\sum_{x,y\in E :x\neq y}\int_0^t(f(y)-f(X_{s-})){\bf 1}_{\{X_{s-}=x\}}d\tilde{N}^{xy}_s,$$
where
$$\tilde{N}^{xy}_s = N^{xy}_s-q(x,y)s$$
is a martingale. Moreover, for any solution of the martingale problem, $P_t(x)=P(X_t=x)$ is a solution of \eqref{feq}: this follows by considering $f={\bf 1}_{\{x\}}$ for $x\in E$ and by taking expectation \cite{AK2015}. The converse of these observations also holds.

\begin{theorem}\label{thm:appendix}
If $(P_t)_{t\geq0}$ is a solution of \eqref{feq}, then there exists a solution of the martingale problem such that for $x\in E\cup\partial$, $P(X_t=x)=P_t(x)$.  If $(X_t)_{t\geq0}$ is a solution of the martingale 
problem, then there exists a solution $(\tilde{X}_t)_{t\geq0}$ of the stochastic equation such that $(X_t)_{t\geq0}$ and $(\tilde{X}_t)_{t\geq0}$ have the same distribution.
\end{theorem}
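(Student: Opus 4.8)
The plan is to treat the two implications separately, since they have rather different flavors.

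\emph{From the forward equation to the martingale problem.} I would take as candidate the \emph{minimal} process associated with $\{q(x,y)\}$: let $\{Z_n\}$ be the jump chain with transition probabilities $p(x,y)=q(x,y)/q(x)$ (state $x$ absorbing if $q(x)=0$), let the holding time in $Z_n$ be, conditionally on $Z_n$, exponential with parameter $q(Z_n)$, put $X_t=Z_n$ on $[\tau_n,\tau_{n+1})$ and $X_t=\partial$ for $t\ge T_\infty=\lim_n\tau_n$, with $Z_0\sim P_0$. Computing the compensator of $f(X_t)$ for $f\in{\cal D}(E)$ and using optional stopping along the jump times $\tau_n$ — here Condition \ref{intcnd} enters through $\sup_x|Af(x)|<\infty$, which makes the stopped processes bounded on compacts, hence uniformly integrable and able to pass to the limit $\tau_n\uparrow T_\infty$ — one checks that \eqref{mgf} is a martingale, so $X$ solves the martingale problem for $A$. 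Taking $f=\mathbf 1_{\{x\}}$ and using Tonelli shows $\widehat P_t(x)=P(X_t=x)$ solves \eqref{feq} with initial datum $P_0$, and that $\widehat P$ is the minimal nonnegative solution. It then remains to identify $\widehat P$ with the given $(P_t)$, i.e. to know that \eqref{feq} has a unique solution for a prescribed $P_0$ under the normalization $P_t(x)\ge 0$, $\sum_{x\in E}P_t(x)\le 1$ together with the convention that $\partial$ is absorbing; this I would obtain by an exhaustion argument, comparing $P_t$ on a finite set $E_n\uparrow E$ with the solution killed outside $E_n$ and letting $n\to\infty$, the point being that no mass can re-enter $E$ from $\partial$.

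\emph{From the martingale problem to the stochastic equation.} This direction is constructive. Taking $f=\mathbf 1_{\{y\}}$ in \eqref{mgf} identifies $X$ as a pure-jump process whose jump from $x$ to $y$ has instantaneous rate $q(x,y)$; in particular $X$ has well-defined successive jump times, is constant between them, and has jump chain with transition probabilities $q(x,y)/q(x)$. On a suitably enlarged probability space I would then build the Poisson processes $N^{xy}$ of intensity $q(x,y)$ by: (i) placing a point of $N^{xy}$ at each time $X$ jumps from $x$ to $y$; and (ii) on the random complement of $\{s:X_{s-}=x\}$, filling in with the points of an independent rate-$q(x,y)$ Poisson process, the fill-in processes being mutually independent and independent of $X$. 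Memorylessness of the exponential holding times together with the strong Markov property then shows that each $N^{xy}$ is a genuine rate-$q(x,y)$ Poisson process and that the family $\{N^{xy}\}$ is independent, while by construction the jumps of $f(X)$ are exactly $\sum_{x,y}\int_0^{\cdot}(f(y)-f(X_{s-}))\mathbf 1_{\{X_{s-}=x\}}dN^{xy}_s$ for every $f\in{\cal D}(E)$; thus $\tilde X:=X$, viewed on the enlarged space, solves \eqref{eq:stochequation88} and has the law of $X$.

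I expect the second direction to be the main obstacle: one must argue carefully that discarding and restarting the exponential clocks at the end of each sojourn of $X$ at a state — whose duration is itself the minimum of the very clocks being encoded — does not alter the joint law of the $\{N^{xy}\}$, and that the representation remains valid up to and including the explosion time $T_\infty$ (where $f(X)$ has its last jump to $\partial$, compensated by the integral term vanishing there). The uniqueness needed to finish the first direction is routine once set up but genuinely uses the absorbing convention at $\partial$. Finally, for the use made of this theorem in the paper only the stationary case $P_t\equiv\pi$ with $\pi$ satisfying \eqref{cnstsol} is required: there the first direction reduces to the standard fact that an invariant law produces a stationary solution of the martingale problem, and the second direction then delivers the stationary solution of \eqref{steq0} that underlies the proof of Theorem \ref{thm:main}.
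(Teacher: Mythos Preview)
Your second direction is sound in spirit and matches the approach the paper alludes to (it simply cites an analogous construction in \cite{Kur11}).

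The first direction, however, has a genuine gap. You build the \emph{minimal} process, obtain its one-dimensional distributions $\widehat P_t$, and then try to identify $\widehat P_t$ with the given $P_t$ by claiming uniqueness for \eqref{feq}. But \eqref{feq} is \emph{not} uniquely solvable in the explosive case, and that is exactly the case of interest. The paper's own Example~\ref{ex:introduction} illustrates this: there $\pi(x)=6/(\pi^2 x^2)$ is a constant solution of \eqref{feq}, while the minimal process started from $\pi$ explodes almost surely, so $\widehat P_t\neq\pi$ for $t>0$. Two distinct solutions of \eqref{feq} with the same initial datum coexist. Your exhaustion argument (``no mass can re-enter $E$ from $\partial$'') yields only $P_t\ge\widehat P_t$; the reverse inequality fails because mass can flow in ``from infinity within $E$'' without ever passing through $\partial$. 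The same objection applies to your closing remark that in the stationary case this ``reduces to the standard fact that an invariant law produces a stationary solution of the martingale problem'': that fact is standard only for non-explosive chains, which is what one is trying to prove.

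The paper therefore does not attempt to construct the process explicitly. It invokes Corollary~3.2 of \cite{KS98} (Kurtz--Stockbridge), a nontrivial existence result asserting that \emph{any} solution of the forward equation arises as the one-dimensional distributions of \emph{some} solution of the martingale problem --- possibly a non-minimal one that re-enters $E$ after explosion. For the stationary case specifically, the paper separately cites Echeverr\'ia's theorem (Theorem~\ref{thm5}). These results are what fill the gap your argument leaves open.
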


\begin{proof}
The first statement follows from Corollary 3.2 of \cite{KS98}. The second statement can be proved by arguments similar to the proof of (17) in \cite{Kur11}.
\end{proof}

It follows from Theorem \ref{thm:appendix} that weak uniqueness for any of the three characterizations implies weak uniqueness for the other two, for a fixed initial distribution. Moreover, the law of a solution to the stochastic equation is uniquely determined up to time $T_{\infty}$, where $T_{\infty}$ is defined as in Definition \ref{def:explosion}. This observation provides the proof of the next result.

\begin{theorem}
 If for a fixed initial distribution $P_0$ we have $P(T_{\infty}=\infty)=1$, then there exists a unique process $(X_t)_{t\geq0}$ with initial distribution $P_0$ satisfying any of the three characterization of Definition \ref{def:appendix}.
\end{theorem}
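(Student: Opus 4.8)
The plan is to reduce everything to weak uniqueness for the stochastic equation~\eqref{eq:stochequation88} (characterization~(c)) and then transfer it to~(a) and~(b) using Theorem~\ref{thm:appendix} and the observations preceding the statement. For existence, note that a solution with initial distribution $P_0$ exists: one may solve~\eqref{eq:stochequation88} recursively over its successive jump times from the Poisson data and set $X_t=\partial$ for $t\ge T_\infty$, or simply invoke the minimal solution of~\eqref{feq} together with Theorem~\ref{thm:appendix}. As already noted, a solution of~\eqref{eq:stochequation88} is a solution of the martingale problem, and taking $f=\mathbf{1}_{\{x\}}$ and expectations shows its one-dimensional distributions solve~\eqref{feq}; Theorem~\ref{thm:appendix} runs the implications in the other direction. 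Hence existence holds for all three characterizations simultaneously, and only uniqueness in law remains to be shown.

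For~(c), recall from the discussion preceding the statement that the law of a solution of~\eqref{eq:stochequation88} is uniquely determined up to $T_\infty$: solving~\eqref{eq:stochequation88} recursively over its jump times pins down the law of $(X_t)_{0\le t<T_\infty}$ in terms of $P_0$ and the intensities. In particular $T_\infty=\lim_i\tau_i$, being a measurable functional of the path on $[0,T_\infty)$, has the same law for every solution of~\eqref{eq:stochequation88} with initial distribution $P_0$. By hypothesis $P(T_\infty=\infty)=1$ for one such solution, hence for all of them. Consequently, if $(X_t)_{t\ge0}$ and $(X'_t)_{t\ge0}$ both solve~\eqref{eq:stochequation88} with initial distribution $P_0$, then $T_\infty=\infty$ a.s.\ for both, and since their laws agree on $[0,T_\infty)=[0,\infty)$ they have the same law on all of $[0,\infty)$. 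This is weak uniqueness for~(c).

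Finally, the transfer: by the remark following Theorem~\ref{thm:appendix}, weak uniqueness for~(c) implies weak uniqueness for~(a) and~(b). Concretely, two solutions $(P_t)$, $(P'_t)$ of~\eqref{feq} with $P_0=P'_0$ lift (via Theorem~\ref{thm:appendix} and the fact that a martingale-problem solution yields a stochastic-equation solution of the same law) to solutions of~\eqref{eq:stochequation88} with one-dimensional distributions $(P_t)$ and $(P'_t)$; weak uniqueness for~(c) forces these to coincide, so $P_t=P'_t$ for all $t$, and the same reasoning handles the martingale problem. Combined with the existence noted above, this gives a process $(X_t)_{t\ge0}$ with initial distribution $P_0$ satisfying any one — equivalently, each — of the three characterizations, unique in law. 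The one step needing care is the propagation of the hypothesis $P(T_\infty=\infty)=1$ from a single solution to every solution; this is precisely where the already-established fact that the law of a solution of~\eqref{eq:stochequation88} is determined up to $T_\infty$ does the work, and it is the crux of the argument.
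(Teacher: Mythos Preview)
Your proposal is correct and follows essentially the same approach as the paper: the paper's proof consists of a single sentence pointing to the preceding observation that the law of a solution of the stochastic equation is uniquely determined up to $T_\infty$, together with the already-stated equivalence of weak uniqueness across the three characterizations. You have simply spelled out these steps in full, including the propagation of the hypothesis $P(T_\infty=\infty)=1$ to all solutions and the explicit treatment of existence, neither of which the paper writes out but both of which are implicit in its one-line argument.
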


We conclude with the following result, which can be read from \cite[Theorem 9.17 of Chapter 4]{ethier_kurtz} and is due to \cite{echeverria}.
\begin{theorem}\label{thm5}
 If $\pi$ is a constant solution to the forward Kolmogorov equation, namely if
$$ \sum_{y\in E\setminus\{x\}} \pi(x)q(x,y)=\sum_{y\in E\setminus\{x\}} \pi(y)q(y,x),$$
then there exists a solution of the martingale problem which is a stationary process with stationary distribution $\pi$.
\end{theorem}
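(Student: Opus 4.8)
The plan is to recast the hypothesis \eqref{cnstsol} as an \emph{infinitesimal invariance} statement for the generator $A$ of Definition \ref{def:appendix}, and then to quote the Echeverria existence criterion in the form given by \cite[Theorem 9.17 of Chapter 4]{ethier_kurtz}. Concretely, I would first show that \eqref{cnstsol} is equivalent to
\[
\sum_{x\in E}\pi(x)Af(x)=0\qquad\text{for every }f\in\mathcal{D}(E).
\]
Fixing $f\in\mathcal{D}(E)$ with finite support $F=\{x:f(x)\neq0\}$, the relevant double sum $\sum_x\pi(x)\sum_{y\neq x}q(x,y)(f(y)-f(x))$ is absolutely convergent: a term is nonzero only if $x\in F$ or $y\in F$; the contribution of $x\in F$ is bounded by $2\|f\|_\infty\sum_{x\in F}\pi(x)q(x)$, which is finite because $q(x)<\infty$ by Condition \ref{intcnd}, and the contribution of $y\in F,\,x\notin F$ is bounded by $\sum_{y\in F}|f(y)|\,\bigl(\sup_{x\neq y}q(x,y)\bigr)$, which is finite by the third bullet of Condition \ref{intcnd}. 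Fubini then lets me swap the order of summation and relabel, after which the inner bracket is exactly the left minus right side of \eqref{cnstsol}, hence zero.

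Next I would place $A$ inside the framework required by the Echeverria criterion. Since $E$ is countable and discrete, its one-point compactification $\widehat E=E\cup\{\partial\}$ is a compact metric space; each $f\in\mathcal{D}(E)$ extends to $C(\widehat E)$ via $f(\partial)=0$, and by the Remark after Definition \ref{def:appendix} the function $Af$ is bounded, so it extends as well with $Af(\partial)=0$. The structural hypotheses are then checked routinely: $\mathcal{D}(E)$ is an algebra which, together with the constants, separates the points of $\widehat E$ and vanishes nowhere (so its closure is $C(\widehat E)$ by Stone--Weierstrass); and $A$ satisfies the positive maximum principle, since if $f\in\mathcal{D}(E)$ attains its maximum over $\widehat E$ at a point $x_0\in E$ then $Af(x_0)=\sum_{y\neq x_0}q(x_0,y)(f(y)-f(x_0))\le0$, while if it is attained at $\partial$ then $Af(\partial)=0$. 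Applying \cite[Theorem 9.17 of Chapter 4]{ethier_kurtz} then produces a stationary solution $(X_t)_{t\geq0}$ of the martingale problem for $A$ whose one-dimensional distribution is $\pi$; because $\pi(\partial)=0$, this process takes values in $E$ for every $t$, which is the assertion.

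I expect the genuinely delicate point to be the justification for invoking Echeverria's theorem at all rather than a shorter argument. It is tempting to note that, since $P_t\equiv\pi$ is a (constant) solution of the forward equation \eqref{feq}, Theorem \ref{thm:appendix} already supplies a martingale-problem solution with $X_t\sim\pi$ for all $t$, and then to invoke Markovianity to upgrade to stationarity; but the martingale problem for $A$ is in general \emph{not} well posed --- the non-uniqueness of the process beyond an explosion time is exactly the phenomenon this paper studies --- so this shortcut is unavailable, and Echeverria's criterion is precisely the tool that manufactures an honestly stationary solution from infinitesimal invariance with no uniqueness input. The remaining, more technical, obstacle is that $E$ is non-compact and the total rates $q(x)$ need not be bounded, so one cannot work directly on $E$ with the full generator; passing to $\widehat E$ and exploiting $\sup_{x}|Af(x)|<\infty$ for $f\in\mathcal{D}(E)$ is what makes the hypotheses of the cited theorem legitimately applicable. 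The Fubini estimate and the positive-maximum-principle verification are then routine.
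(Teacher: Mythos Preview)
Your proposal is correct and follows exactly the route the paper takes: the paper's ``proof'' of this theorem is nothing more than the citation of \cite[Theorem 9.17 of Chapter 4]{ethier_kurtz} (Echeverria's criterion), and you supply the verification of the hypotheses that the paper omits. Your Fubini estimate for the equivalence $\sum_x\pi(x)Af(x)=0\Leftrightarrow$ \eqref{cnstsol}, the one-point compactification together with the Stone--Weierstrass and positive-maximum-principle checks, and your remark about why the shortcut through Theorem~\ref{thm:appendix} alone fails (no well-posedness, hence no automatic upgrade from $P_t\equiv\pi$ to process stationarity) are all on point and match how the cited result is meant to be invoked here.
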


Note that under the hypotheses of Theorem \ref{thm5}, Theorem \ref{thm:appendix} implies the existence of a solution to the martingale problem with $P_t=\pi$ for all $t\geq 0$, and a stationary solution to the stochastic equation \eqref{eq:stochequation88}.

 \bibliographystyle{plain}
\bibliography{nonexplosivity}

\end{document}